\documentclass[12pt]{amsart}

\oddsidemargin=17pt \evensidemargin=17pt
\headheight=9pt     \topmargin=26pt
\textheight=576pt   \textwidth=433.8pt

\usepackage{amssymb,amsmath,amsthm,tikz-cd,hyperref}

\newcommand{\KK}{\mathbf{K}}
\newcommand{\mov}{\mathrm{mov}}
\newcommand{\NN}{\mathbb{N}}
\newcommand{\OO}{\mathcal{O}}
\newcommand{\PP}{\mathbb{P}}
\newcommand{\hooklongrightarrow}{\lhook\joinrel\longrightarrow}

\DeclareMathOperator{\Bs}{Bs}

\DeclareMathOperator{\vol}{vol}

\theoremstyle{plain}
\newtheorem{theorem}{Theorem}[section]
\newtheorem{lemma}[theorem]{Lemma}
\newtheorem{corollary}[theorem]{Corollary}

\theoremstyle{definition}
\newtheorem{definition}[theorem]{Definition}
\newtheorem{remark}[theorem]{Remark}
\newtheorem*{notation and conventions}{Notation and Conventions}
\newtheorem*{acknowledgment}{Acknowledgment}

\begin{document}

\mbox{}
\vspace{-1.1ex}
\title{Asymptotic constructions and invariants of graded linear series}
\author{Chih-Wei Chang}
\address{Department of Mathematics \\
National Tsing Hua University \\
Taiwan}
\email{\texttt{s101021803@m101.nthu.edu.tw}}
\author{Shin-Yao Jow}
\address{Department of Mathematics \\
National Tsing Hua University \\
Taiwan}
\email{\texttt{syjow@math.nthu.edu.tw}}
\date{}

\begin{abstract}
Let $X$ be a complete variety of dimension~$n$ over an algebraically closed field $\KK$. Let $V_\bullet$ be a graded linear series associated to a line bundle $L$ on $X$, that is, a collection $\{V_m\}_{m\in\NN}$ of vector subspaces $V_m\subseteq H^0(X,L^{\otimes m})$ such that $V_0=\KK$ and $V_k\cdot V_\ell\subseteq V_{k+\ell}$ for all $k,\ell\in\NN$. For each $m$ in the semigroup \[
 \mathbf{N}(V_\bullet)=\{m\in\NN\mid V_m\ne 0\},\]
the linear series $V_m$ defines a rational map \[
\phi_m\colon X\dashrightarrow Y_m\subseteq\PP(V_m), \]
where $Y_m$ denotes the closure of the image $\phi_m(X)$. We show that for all sufficiently large $m\in \mathbf{N}(V_\bullet)$, these rational maps $\phi_m\colon X\dashrightarrow Y_m$ are birationally equivalent, so in particular $Y_m$ are of the same dimension~$\kappa$, and if $\kappa=n$ then $\phi_m\colon X\dashrightarrow Y_m$ are generically finite of the same degree. If $\mathbf{N}(V_\bullet)\ne\{0\}$, we show that the limit \[
 \vol_\kappa(V_\bullet)=\lim_{m\in \mathbf{N}(V_\bullet)}\frac{\dim_\KK V_m}{m^\kappa/\kappa!}\]
exists, and $0<\vol_\kappa(V_\bullet)<\infty$. Moreover, if $Z\subseteq X$ is a general closed subvariety of dimension~$\kappa$, then the limit \[
 (V_\bullet^\kappa\cdot Z)_\mov=\lim_{m\in \mathbf{N}(V_\bullet)}\frac{\#\bigl((D_{m,1}\cap\cdots\cap D_{m,\kappa}\cap Z)\setminus \Bs(V_m)\bigr)}{m^\kappa}\]
exists, where $D_{m,1},\ldots,D_{m,\kappa}\in |V_m|$ are general divisors, and \[
(V_\bullet^\kappa\cdot Z)_\mov=\deg\bigl(\phi_m|_Z\colon Z\dashrightarrow \phi_m(Z)\bigr)\vol_\kappa(V_\bullet) \]
for all sufficiently large $m\in\mathbf{N}(V_\bullet)$.
\end{abstract}

\keywords{graded linear series, Iitaka fibration, Iitaka dimension, asymptotic moving intersection}
\subjclass[2010]{14C20}

\maketitle

\begin{notation and conventions}
Let $\NN$ denote the set of \emph{nonnegative} integers. We work over an algebraically closed field $\KK$ of arbitrary characteristic. A \emph{variety} is reduced and irreducible. \emph{Points} of a variety refer to closed points. If $X$ is a variety, $\KK(X)$ denotes the \emph{function field} of $X$. If $V$ is a vector space, $\PP(V)$ denotes the projective space of one-dimensional \emph{quotients} of $V$.
\end{notation and conventions}

\section{Introduction}
Let $X$ be a complete variety. Recall from \cite[Definition~2.4.1]{L} that a \emph{graded linear series} associated to a line bundle $L$ on $X$ is a collection $V_\bullet=\{V_m\}_{m\in\NN}$ of vector subspaces $V_m\subseteq H^0(X,L^{\otimes m})$ such that $V_0=\KK$ and $V_k\cdot V_\ell\subseteq V_{k+\ell}$ for all $k,\ell\in\NN$. The graded linear series $\{H^0(X,L^{\otimes m})\}_{m\in\NN}$ is called the \emph{complete graded linear series} associated to $L$. The purpose of this article is to generalize some fundamental results about asymptotic constructions for complete graded linear series to arbitrary ones. This is useful because incomplete graded linear series do naturally arise, most notably in connection with the restricted volume \cite[\S 2.4]{LM}. Our results generalize and unify several previous results in the literature: see Remark~\ref{r:special cases in lit}.

For each $m$ in the semigroup \[
 \mathbf{N}(V_\bullet)=\{m\in\NN\mid V_m\ne 0\},\]
the linear series $V_m$ defines a rational map \[
\phi_m\colon X\dashrightarrow Y_m\subseteq\PP(V_m), \]
where $Y_m$ denotes the closure of the image $\phi_m(X)$. Our first result is about the asymptotic behavior of $\phi_m$. When $V_\bullet$ is the complete graded linear series and $X$ is normal, the well-known theorem of Iitaka fibrations \cite[Theorem~2.1.33]{L} says that for all sufficiently large $m\in \mathbf{N}(V_\bullet)$, the rational maps $\phi_m\colon X\dashrightarrow Y_m$ are birationally equivalent to a fixed algebraic fiber space $\phi_\infty\colon X_\infty\longrightarrow Y_\infty$. We show that, for an arbitrary graded linear series, the same conclusion holds except that $\phi_\infty\colon X_\infty\longrightarrow Y_\infty$ may not be a fiber space.

\begin{theorem}[Asymptotic Iitaka map of graded linear series] \label{t:Iitaka for GLS}
Let $X$ be a complete variety, and let $V_\bullet$ be a graded linear series associated to a line bundle $L$ on $X$. Then there exist projective varieties $X_\infty$, $Y_\infty$, and for each $m\in \mathbf{N}(V_\bullet)$ a commutative diagram \[
\begin{tikzcd}
    X \ar[d,dashed,"\phi_m"'] & X_\infty \ar[l,"u_\infty"'] \ar[d,"\phi_\infty"] \\
    Y_m & Y_\infty \ar[l,dashed,"\nu_m"]
  \end{tikzcd}\]
of surjective morphisms and dominant rational maps, such that $u_\infty$ is birational, and $\nu_m$ is birational for all sufficiently large $m\in \mathbf{N}(V_\bullet)$.
\end{theorem}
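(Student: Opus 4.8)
The plan is to recast the statement entirely in terms of function fields. (The case $\mathbf{N}(V_\bullet)=\{0\}$ is trivial, with every $Y_m$ a point, so assume $\mathbf{N}(V_\bullet)\neq\{0\}$.) For each $m\in\mathbf{N}(V_\bullet)$ the map $\phi_m$ is dominant onto $Y_m$, so $\phi_m^{*}$ embeds $\KK(Y_m)$ into $\KK(X)$; write $R_m\subseteq\KK(X)$ for the image, the subfield generated over $\KK$ by the ratios $s/t$ with $s,t\in V_m$, $t\neq 0$. The key mechanism is a monotonicity: given $m,m'\in\mathbf{N}(V_\bullet)$, pick $0\neq h\in V_{m'}$; then $sh,th\in V_{m+m'}$ and $s/t=(sh)/(th)$ for all $s,t\in V_m$, so $R_m\subseteq R_{m+m'}$. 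Since every element of $\mathbf{N}(V_\bullet)$ is a multiple of $d:=\gcd(\mathbf{N}(V_\bullet)\setminus\{0\})$ while every sufficiently large multiple of $d$ lies in $\mathbf{N}(V_\bullet)$, this gives $R_s\subseteq R_t$ whenever $s,t\in\mathbf{N}(V_\bullet)$ with $t-s$ sufficiently large.

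First I would fix the common dimension. The integers $\kappa_m=\dim Y_m=\operatorname{trdeg}_{\KK}R_m$ are bounded by $n$ and nondecreasing along this directed system, so they attain a maximum $\kappa$; picking $m_1$ with $\kappa_{m_1}=\kappa$ and invoking monotonicity shows $\kappa_m=\kappa$ for all sufficiently large $m\in\mathbf{N}(V_\bullet)$. For such $m\le m'$ the extension $R_{m'}/R_m$ is algebraic, hence the algebraic closures $\widehat R_m$ of $R_m$ inside $\KK(X)$ satisfy $\widehat R_m=\widehat R_{m'}=:\widehat R_\infty$ for all large $m$. The heart of the argument — and the step I expect to be the main obstacle — is to upgrade this to stabilization of the $R_m$ themselves, not merely their algebraic closures. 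For that I would first record the elementary fact that an algebraic intermediate extension of a finitely generated field extension is finite (adjoin to $R_m$ a transcendence basis of $\KK(X)/R_m$ and use linear disjointness), which gives $[\widehat R_\infty:R_m]<\infty$ for all large $m$. Then $R_m\subseteq R_{m'}\subseteq\widehat R_\infty$ forces $[\widehat R_\infty:R_m]=[\widehat R_\infty:R_{m'}]\cdot[R_{m'}:R_m]$, so the positive integers $[\widehat R_\infty:R_m]$ are nonincreasing, hence eventually constant, whence $R_m=R_{m'}=:R_\infty$ for all sufficiently large $m$; sandwiching an arbitrary large $m$ between two such indices gives $R_m=R_\infty$ for every large $m$, and $R_m\subseteq R_\infty$ for every $m\in\mathbf{N}(V_\bullet)$.

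It remains to build the diagram. Since $R_\infty=\KK(Y_{m_0})$ for any sufficiently large $m_0\in\mathbf{N}(V_\bullet)$, put $Y_\infty:=Y_{m_0}$, a projective variety. After replacing $X$ by a projective birational model via Chow's lemma if $X$ is not already projective, let $X_\infty$ be the closure in $X\times Y_\infty$ of the graph of the rational map $X\dashrightarrow Y_\infty$; then $X_\infty$ is a projective variety, the first projection $u_\infty\colon X_\infty\to X$ is birational and surjective, and the second projection $\phi_\infty\colon X_\infty\to Y_\infty$ is a surjective morphism. For each $m\in\mathbf{N}(V_\bullet)$, the chain of function fields $R_m\subseteq R_\infty\subseteq\KK(X)=\KK(X_\infty)$ corresponds, under the contravariant equivalence between dominant rational maps of varieties and $\KK$-embeddings of their function fields, to a dominant rational map $\nu_m\colon Y_\infty\dashrightarrow Y_m$ with $\nu_m\circ\phi_\infty=\phi_m\circ u_\infty$; moreover $\nu_m$ is birational exactly when $R_m=R_\infty$, i.e.\ for all sufficiently large $m\in\mathbf{N}(V_\bullet)$. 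This is the required commutative diagram.
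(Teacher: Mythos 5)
Your proof is correct, and its skeleton is the same as the paper's: identify $\KK(Y_m)$ with the subfield $R_m\subseteq\KK(X)$ generated by ratios of sections of $V_m$ (the paper does this via the homogeneous coordinate ring $\KK[V_m]$ and its degree-zero fraction field $\KK[V_m]_{(\langle 0\rangle)}$, Lemma~\ref{l:Q((K[V_m]))=K(Y_m)}), obtain the inclusions $R_m\subseteq R_{m+m'}$ by multiplying with a fixed nonzero section of $V_{m'}$, show these subfields stabilize for large $m$, and realize $X_\infty$ as the closure of the graph of the limiting map after making $X$ projective by Chow's lemma. The only genuine divergence is how stabilization is proved. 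You go through the transcendence degree, the common algebraic closure $\widehat R_\infty$ inside $\KK(X)$, finiteness of $[\widehat R_\infty:R_m]$ (an algebraic subextension of a finitely generated extension is finite), and descent on these degrees; this is valid, though the monotonicity of $[\widehat R_\infty:R_m]$ should be stated with respect to the relation ``$m'-m$ sufficiently large (or $m'-m\in\mathbf{N}(V_\bullet)$)'' rather than $m\le m'$ --- your sandwiching remark does take care of this. The paper reaches the same point more directly: the fields $\KK[V_m]_{(\langle 0\rangle)}$ form a directed system whose union is $R(V_\bullet)_{(\langle 0\rangle)}\subseteq\KK(X)$, a finitely generated extension of $\KK$, so the union is attained at a single index $\ell$, and then $V_\ell\cdot V_{m-\ell}\subseteq V_m$ forces $\KK[V_m]_{(\langle 0\rangle)}=R(V_\bullet)_{(\langle 0\rangle)}$ for all large $m\in\mathbf{N}(V_\bullet)$; it also takes $Y_\infty$ to be a projective model of $R(V_\bullet)_{(\langle 0\rangle)}$ rather than your (birationally equivalent) choice $Y_\infty=Y_{m_0}$. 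Both arguments rest on the same field-theoretic input (finite generation of $\KK(X)$ over $\KK$ and standard facts about intermediate fields); yours avoids introducing the section ring at the cost of a longer degree argument, while the paper's directed-union trick gives the stabilization in a few lines.
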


If $V_\bullet$ is \emph{semiample}, meaning that $V_m$ is basepoint-free for some $m>0$, then the maps $\phi_{m}\colon X\longrightarrow Y_{m}$ are surjective morphisms for all $m$ in the sub-semigroup\[
 M(V_\bullet)=\{m\in\NN\mid V_m\text{ is basepoint-free}\}\subseteq \mathbf{N}(V_\bullet).\]
In this case we can conclude, as in the theorem of semiample fibrations \cite[Theorem~2.1.27]{L}, that for all sufficiently large $m\in M(V_\bullet)$, the morphisms $\phi_{m}\colon X\longrightarrow Y_{m}$ are isomorphic to a fixed morphism.

\begin{theorem}[Asymptotic Iitaka morphism of semiample graded linear series] \label{t:Iitaka for semiample GLS}
Let $X$ be a complete variety, and let $V_\bullet$ be a semiample graded linear series associated to a line bundle $L$ on $X$. Then there exist a projective variety $Y_\infty$, and for each $m\in M(V_\bullet)$ a commutative diagram
\begin{equation}\label{CD:semiample Iitaka}
\begin{tikzcd}[column sep=tiny]
& X \arrow[dl,"\phi_m"'] \arrow[dr,"\phi_\infty"] & \\
Y_m & & Y_\infty \arrow[ll,"\nu_m"]
\end{tikzcd}
\end{equation}
of surjective morphisms, such that $\nu_m$ is finite for all positive $m\in M(V_\bullet)$, and is an isomorphism for all sufficiently large $m\in M(V_\bullet)$.
\end{theorem}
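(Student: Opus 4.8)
The plan is to follow the proof of the semiample fibration theorem \cite[Theorem~2.1.27]{L}, with its use of the section ring of an ample line bundle replaced by a Noetherianity argument on a system of finite covers. Three preliminaries: a product of basepoint-free subseries is basepoint-free, so $M(V_\bullet)$ is a sub-semigroup of $\NN$ and $V_m\cdot V_{m'}\subseteq V_{m+m'}$ is basepoint-free for $m,m'\in M(V_\bullet)$; the tautological quotient gives $\phi_m^{*}\OO_{Y_m}(1)=L^{\otimes m}$ with $\OO_{Y_m}(1)$ very ample, so (as $X$ is complete, hence $\phi_m$ proper) for positive $m$ the morphism $\phi_m$ contracts a curve $C\subseteq X$ exactly when $(L\cdot C)=0$, independently of $m$; and the inclusion $V_m\cdot V_{m'}\subseteq V_{m+m'}$ of basepoint-free subseries determines a linear projection which, $X$ being complete, restricts to a surjective morphism $Y_{m+m'}\to Y_m$ through which $\phi_m$ factors.

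The heart of the argument is to produce a common Stein base. Fix a positive $a\in M(V_\bullet)$ and let $X\xrightarrow{f}\overline{X}\xrightarrow{g_a}Y_a$ be the Stein factorization of $\phi_a$, so $\overline{X}$ is a projective variety, $f$ is surjective with connected fibers and $f_{*}\OO_X=\OO_{\overline{X}}$, and $g_a$ is finite. For any positive $m\in M(V_\bullet)$, $\phi_m$ and $f$ contract exactly the same curves, namely those $C$ with $(L\cdot C)=0$ (indeed $f$ contracts $C$ iff $\phi_a$ does, since $g_a$ is finite); since $f$ has connected fibers and any morphism from a connected complete variety contracting all of its curves is constant, $\phi_m$ is constant on the fibers of $f$, hence factors as $\phi_m=g_m\circ f$ with $g_m\colon\overline{X}\to Y_m$ a morphism, and $g_m$ is finite (if it had a positive-dimensional fiber, the $f$-preimage of that fiber would contain a curve contracted by $\phi_m$ but not by $f$). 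Put $A_m:=g_m^{*}\OO_{Y_m}(1)$, an ample line bundle on $\overline{X}$ with $f^{*}A_m=L^{\otimes m}$; since $f_{*}\OO_X=\OO_{\overline{X}}$ forces $f^{*}$ to be injective on Picard groups, the family $\{A_m\}$ is additive, $A_m\otimes A_{m'}\cong A_{m+m'}$, and the projection formula identifies each $V_m$ with a basepoint-free subspace $W_m\subseteq H^0(\overline{X},A_m)$ satisfying $W_m\cdot W_{m'}\subseteq W_{m+m'}$, with $g_m$ the morphism it defines. The theorem is thereby reduced to the following: given such an additive family of \emph{ample} line bundles $\{A_m\}$ on the projective variety $\overline{X}$ equipped with subspaces $W_m$ as above, the morphisms $g_m$ — equivalently the varieties $Y_m$ — eventually stabilize.

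For the stabilization, recall each $g_m$ is finite, and note that the morphism $Y_{m+m'}\to Y_m$ attached to $W_m\cdot W_{m'}\subseteq W_{m+m'}$ is likewise finite, since a curve it contracted would lift along the finite surjection $g_{m+m'}$ to a curve contracted by the finite morphism $g_m$. Fix a positive $m_1\in M(V_\bullet)$ and order the positive elements of $M(V_\bullet)$ by $m\preceq m'\iff m'-m\in M(V_\bullet)$ (a directed order). Then the $Y_{m_1+m}$ form an increasing directed system of finite covers of $Y_{m_1}$, each receiving a finite surjection from the fixed variety $\overline{X}$; equivalently, the $\OO_{Y_{m_1}}$-subalgebras $(p_m)_{*}\OO_{Y_{m_1+m}}$ — $p_m\colon Y_{m_1+m}\to Y_{m_1}$ being the structure morphism — form an increasing chain of subsheaves of the coherent sheaf $(g_{m_1})_{*}\OO_{\overline{X}}$, and so this chain is eventually constant; in that range the structure morphisms among the $Y_n$ are isomorphisms. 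Because $M(V_\bullet)$ contains all sufficiently large multiples of $d:=\gcd M(V_\bullet)$, one obtains an $N_0\in M(V_\bullet)$ and a projective variety $Y_\infty$ with $Y_n\cong Y_\infty$ compatibly, and all structure morphisms among them isomorphisms, for every $n\in M(V_\bullet)$ with $n\geq N_0$.

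Finally, assemble diagram~\eqref{CD:semiample Iitaka}: take $Y_\infty:=Y_{N_0}$ and $\phi_\infty:=\phi_{N_0}=g_{N_0}\circ f\colon X\to Y_\infty$. For positive $m\in M(V_\bullet)$, the inclusion $W_{N_0}\cdot W_m\subseteq W_{N_0+m}$ gives a finite morphism $\beta_m\colon Y_{N_0+m}\to Y_m$ and the structure morphism $p_m\colon Y_{N_0+m}\to Y_{N_0}$; as $N_0$ and $N_0+m$ both lie in the stabilized range, $p_m$ is an isomorphism, so $\nu_m:=\beta_m\circ p_m^{-1}\colon Y_\infty\to Y_m$ is finite, and tracing the factorizations through $f$ yields $\phi_m=\nu_m\circ\phi_\infty$; when $m$ is itself large, $\beta_m$ too is a structure morphism in the stabilized range, hence an isomorphism, so $\nu_m$ is an isomorphism (for $m=0$ one takes $\nu_0$ to be the constant morphism to $Y_0$). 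I expect the crux to be the second paragraph — recognizing that all the $\phi_m$ share a single Stein base carrying an additive family of ample bundles; granting this, the third paragraph is a routine Noetherianity argument, and Theorem~\ref{t:Iitaka for GLS} is not needed (though it also shows $\dim Y_m$ is eventually constant, whence one deduces $\dim Y_m=\kappa$ for every positive $m\in M(V_\bullet)$).
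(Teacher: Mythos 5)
Your proof is correct, but it reaches the key stabilization step by a genuinely different route than the paper. The paper obtains its fixed dominating object by citing the semiample fibration theorem \cite[Theorem~2.1.27]{L} for the line bundle $L$ itself (semiample because some $V_m$ is free): this gives a single variety $Y$ with finite surjections $\pi_m\colon Y\to Y_m$, the transition maps are Veronese projections $\pi_{i,j}$ defined only for $i\mid j$, Noetherianity of $\OO_Y$ over $\OO_{Y_{k_1}}$ stops any strictly increasing divisibility chain, and a separate Segre argument with $V_{\ell_0}\otimes V_{m-\ell_0}\to V_m$ is then needed to upgrade ``isomorphism along divisibility'' to ``isomorphism for all large $m\in M(V_\bullet)$.'' You instead build the dominating object by hand: Stein-factorize one $\phi_a$, note that every $\phi_m$ with $m>0$ contracts exactly the curves $C$ with $(L\cdot C)=0$, and use rigidity ($f_*\OO_X=\OO_{\overline X}$) to factor all $\phi_m$ through the same $\overline X$ with $g_m$ finite; your Noetherian step then runs on the increasing chain of coherent $\OO_{Y_{m_1}}$-subalgebras $(p_m)_*\OO_{Y_{m_1+m}}\subseteq (g_{m_1})_*\OO_{\overline X}$. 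Because you index the system additively over $M(V_\bullet)$, with Segre-type projections $Y_{m+m'}\to Y_m$ as transition maps, the ``isomorphism for $m\gg0$'' statement falls out of the stabilization directly, with no separate endgame. What your route buys is self-containedness: no appeal to \cite[Theorem~2.1.27]{L} (stated there for normal projective varieties, while $X$ here is merely complete), at the price of carrying the Stein/rigidity argument. One point of precision: the projection attached to $V_m\cdot V_{m'}\subseteq V_{m+m'}$ maps $Y_{m+m'}$ onto the image of $X$ in $\PP(V_m\cdot V_{m'})$, and to land in $Y_m$ one must identify that image, via the closed embedding $\PP(V_m\cdot V_{m'})\hookrightarrow\PP(V_m\otimes V_{m'})$ induced by the multiplication surjection, with a subvariety of $Y_m\times Y_{m'}$ and then compose with the first projection --- exactly the paper's $\mu_m$ and $pr_{Y_{\ell_0}}$ step; your one-sentence formulation compresses this, but the claim and its use later are correct.
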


An immediate corollary of Theorem~\ref{t:Iitaka for GLS} and \ref{t:Iitaka for semiample GLS} is

\begin{corollary}[Iitaka dimension and asymptotic degree]\label{c:Iitaka dim and asymp deg}
Let $X$ be a complete variety, and let $V_\bullet$ be a graded linear series associated to a line bundle $L$ on $X$ with $\mathbf{N}(V_\bullet)\ne\{0\}$.
\begin{enumerate}
 \item There is a nonnegative integer $\kappa(V_\bullet)\le \dim X$, called the \emph{Iitaka dimension} of $V_\bullet$, such that \[
 \dim\phi_m(X)\le \kappa(V_\bullet) \]
for all $m\in \mathbf{N}(V_\bullet)$, and equality holds if $m$ is sufficiently large or if $m>0$ and $V_m$ is basepoint-free. Indeed $\kappa(V_\bullet)=\dim Y_\infty$.
 \item If $\kappa(V_\bullet)=\dim X$, or equivalently $\phi_m\colon X\dashrightarrow Y_m$ is generically finite for some $m\in \mathbf{N}(V_\bullet)$, then there is a positive integer $\delta(V_\bullet)$, which we call the \emph{asymptotic degree} of $V_\bullet$, such that \[
 \deg(\phi_m\colon X\dashrightarrow Y_m)\ge \delta(V_\bullet) \]
for all $m\in \mathbf{N}(V_\bullet)$ with $\phi_m\colon X\dashrightarrow Y_m$ generically finite, and equality holds if $m$ is sufficiently large. Indeed $\delta(V_\bullet)=\deg(\phi_\infty\colon X_\infty\longrightarrow Y_\infty)$.
\end{enumerate}
\end{corollary}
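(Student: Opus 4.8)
The plan is to deduce both statements formally from Theorems~\ref{t:Iitaka for GLS} and~\ref{t:Iitaka for semiample GLS}, so that the argument is essentially bookkeeping. I would set $\kappa(V_\bullet):=\dim Y_\infty$, where $Y_\infty$ is the projective variety furnished by Theorem~\ref{t:Iitaka for GLS}. Since $u_\infty\colon X_\infty\to X$ is birational we have $\dim X_\infty=\dim X$, and since $\phi_\infty\colon X_\infty\to Y_\infty$ is surjective we get $0\le\kappa(V_\bullet)=\dim Y_\infty\le\dim X_\infty=\dim X$. For each $m\in\mathbf{N}(V_\bullet)$ the rational map $\nu_m\colon Y_\infty\dashrightarrow Y_m$ is dominant, so $\dim\phi_m(X)=\dim Y_m\le\dim Y_\infty=\kappa(V_\bullet)$; when $m$ is large $\nu_m$ is birational and equality holds. (As $\mathbf{N}(V_\bullet)\ne\{0\}$ and a nonzero section $s\in V_m$ yields the nonzero section $s^k\in V_{km}$, the set $\mathbf{N}(V_\bullet)$, hence the set of large $m$, is infinite.) This proves part~(1) except for the basepoint-free case.

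For that case I would bring in Theorem~\ref{t:Iitaka for semiample GLS}. If $V_d$ is basepoint-free with $d>0$, then $V_{kd}\supseteq V_d^{\cdot k}$ is basepoint-free for every $k\ge 1$ (a power $s^k$ has no zero where $s$ has none), so $V_\bullet$ is semiample and $M(V_\bullet)$ is infinite; let $Y_\infty'$ denote the variety of Theorem~\ref{t:Iitaka for semiample GLS}. For every positive $m\in M(V_\bullet)$ the morphism $\nu_m\colon Y_\infty'\to Y_m$ in~\eqref{CD:semiample Iitaka} is finite and surjective, hence $\dim Y_m=\dim Y_\infty'$ independently of such $m$; choosing a large $m\in M(V_\bullet)$, for which the previous paragraph also gives $\dim Y_m=\kappa(V_\bullet)$, we conclude $\dim Y_\infty'=\kappa(V_\bullet)$ and therefore $\dim\phi_m(X)=\dim Y_m=\kappa(V_\bullet)$ whenever $m>0$ and $V_m$ is basepoint-free. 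I want to stress that this step genuinely uses the \emph{finiteness} of $\nu_m$: from Theorem~\ref{t:Iitaka for GLS} alone, $\nu_m$ being merely dominant yields only $\dim Y_m\le\kappa(V_\bullet)$.

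For part~(2) I would first record the equivalence. If $\phi_m$ is generically finite for some $m$, then $\dim X=\dim Y_m\le\kappa(V_\bullet)\le\dim X$, so $\kappa(V_\bullet)=\dim X$; conversely, if $\kappa(V_\bullet)=\dim X$ then $\dim Y_m=\dim X$ for large $m$, and a dominant rational map between varieties of equal dimension is generically finite. Assume now $\kappa(V_\bullet)=\dim X$. Then $\phi_\infty\colon X_\infty\to Y_\infty$ is a surjective morphism of varieties of the same dimension, hence generically finite, and I put $\delta(V_\bullet):=\deg\phi_\infty=[\KK(X_\infty):\phi_\infty^*\KK(Y_\infty)]$, a positive integer. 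Using the birationality of $u_\infty$ to identify $\KK(X)$ with $\KK(X_\infty)$, the commutativity $\phi_m\circ u_\infty=\nu_m\circ\phi_\infty$ of the diagram in Theorem~\ref{t:Iitaka for GLS} translates into a tower of subfields $\phi_m^*\KK(Y_m)\subseteq\phi_\infty^*\KK(Y_\infty)\subseteq\KK(X)$. Hence, for any $m$ with $\phi_m$ generically finite, multiplicativity of degrees gives
\[
\deg\phi_m=\delta(V_\bullet)\cdot[\phi_\infty^*\KK(Y_\infty):\phi_m^*\KK(Y_m)]\ge\delta(V_\bullet),
\]
and for large $m$ the map $\nu_m$ is birational, so $\phi_m^*\KK(Y_m)=\phi_\infty^*\KK(Y_\infty)$ and the inequality becomes an equality. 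This is precisely part~(2), with $\delta(V_\bullet)=\deg\phi_\infty$.

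I do not expect a serious obstacle — the statement really is a corollary — but the one place requiring actual input rather than formal manipulation is the basepoint-free case of part~(1), where one must invoke Theorem~\ref{t:Iitaka for semiample GLS} and exploit finiteness of the transition morphisms. A secondary point to watch is that the two asymptotic models of Theorems~\ref{t:Iitaka for GLS} and~\ref{t:Iitaka for semiample GLS} are a priori different, so one should claim only what is true and needed, namely that they share the dimension $\kappa(V_\bullet)$; and one must keep the direction of every function-field inclusion in part~(2) straight.
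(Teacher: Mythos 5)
Your proposal is correct and is exactly the deduction the paper has in mind: the corollary is stated there without proof as an immediate consequence of Theorems~\ref{t:Iitaka for GLS} and~\ref{t:Iitaka for semiample GLS}, with $\kappa(V_\bullet)=\dim Y_\infty$ and $\delta(V_\bullet)=\deg\phi_\infty$, and your function-field bookkeeping (including the use of finiteness of $\nu_m$ from Theorem~\ref{t:Iitaka for semiample GLS} for the basepoint-free case) fills in precisely the routine details the authors leave implicit.
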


The Iitaka dimension $\kappa(L)$ of a line bundle $L$ on a complete variety $X$ is defined to be the Iitaka dimension of its associated complete linear series, and $L$ is said to be \emph{big} if $\kappa(L)=\dim X$. If $L$ is big and $\dim X=n$, it is well-known that the limit \[
 \vol(L)=\lim_{m\to\infty}\frac{h^0(X,L^{\otimes m})}{m^n/n!}\]
exists \cite[Example~11.4.7]{L} and is positive \cite[Corollary~2.1.38]{L}. Moreover, this limit, called the \emph{volume} of $L$, can be approached by the ``moving intersection number'' of $n$ general divisors $E_{m,1},\ldots,E_{m,n}\in |L^{\otimes m}|$: \[
 \vol(L)=\lim_{m\to\infty}\frac{\#\bigl((E_{m,1}\cap\cdots\cap E_{m,n})\setminus \Bs|L^{\otimes m}|\bigr)}{m^n}, \]
where $\Bs|L^{\otimes m}|$ denotes the base locus of $|L^{\otimes m}|$ \cite[Theorem~11.4.11]{L}. We will generalize these results to arbitrary graded linear series $V_\bullet$. Since the usual volume of $V_\bullet$ is $0$ if $\kappa(V_\bullet)<n$, to get more interesting results we will consider the ``$\kappa(V_\bullet)$-dimensional volume'' of $V_\bullet$ and moving intersections of $V_m$ with a general $\kappa(V_\bullet)$-dimensional closed subvariety.

\begin{definition}[Moving intersection number]
Let $X$ be a complete variety, $Z\subseteq X$ be a closed subvariety, and $L$ be a line bundle on $X$. Let $V\subseteq H^0(X,L)$ be a nonzero subspace, and denote by $\Bs(V)$ the base locus of $V$. If $\dim Z=k$, the \emph{moving intersection number} of $V$ with $Z$, denoted by $(V^k\cdot Z)_\mov$, is defined by choosing $k$ general divisors $D_1,\ldots,D_k\in |V|$ and putting \[
 (V^k\cdot Z)_\mov=\#\bigl((D_1\cap\cdots\cap D_k\cap Z)\setminus \Bs(V)\bigr).\]
\end{definition}

\begin{theorem}[$\kappa$-volume and asymptotic moving intersection number] \label{t:kappa vol and asymp mov int}
Let $X$ be a complete variety, and let $V_\bullet$ be a graded linear series associated to a line bundle $L$ on $X$ with $\mathbf{N}(V_\bullet)\ne\{0\}$. Write $\kappa=\kappa(V_\bullet)$ and $n=\dim X$.
\begin{enumerate}
 \item\label{en:kappa vol}
 The limit \[
 \vol_\kappa(V_\bullet)=\lim_{m\in \mathbf{N}(V_\bullet)}\frac{\dim_\KK V_m}{m^\kappa/\kappa!}\]
exists, and $0<\vol_\kappa(V_\bullet)<\infty$. We call $\vol_\kappa(V_\bullet)$ the \emph{$\kappa$-volume} of $V_\bullet$.

 \item\label{en:asymp mov int}
 If $Z\subseteq X$ is a closed subvariety such that $\dim Z=\kappa$ and $Z\nsubseteq \Bs(V_m)$ for some $m>0$, then the limit \[
  (V_\bullet^\kappa\cdot Z)_\mov=\lim_{m\in \mathbf{N}(V_\bullet|_Z)}\frac{(V_m^\kappa\cdot Z)_\mov}{m^\kappa}\]
  exists, and $0\le (V_\bullet^\kappa\cdot Z)_\mov<\infty$. We call $(V_\bullet^\kappa\cdot Z)_\mov$ the \emph{asymptotic moving intersection number} of $V_\bullet$ with $Z$. Moreover, $(V_\bullet^\kappa\cdot Z)_\mov>0$ if and only if there exists $\ell\in \mathbf{N}(V_\bullet)$ such that $Z\nsubseteq \Bs(V_\ell)$ and $\dim\phi_\ell(Z)=\kappa$, in which case \[
      (V_\bullet^\kappa\cdot Z)_\mov=\delta(V_\bullet|_Z)\vol_\kappa(V_\bullet)=\deg\bigl(\phi_m|_Z\colon Z\dashrightarrow \phi_m(Z)\bigr)\vol_\kappa(V_\bullet) \]
  for all sufficiently large $m\in\mathbf{N}(V_\bullet)$ with $Z\nsubseteq \Bs(V_m)$. In particular, if $\kappa=n$, then \[
      (V_\bullet^n\cdot X)_\mov=\delta(V_\bullet)\vol_n(V_\bullet)=\deg(\phi_\infty\colon X_\infty\longrightarrow Y_\infty)\vol_n(V_\bullet). \]
\end{enumerate}
\end{theorem}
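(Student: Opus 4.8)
The plan is to reduce everything to the asymptotic Iitaka picture furnished by Theorem~\ref{t:Iitaka for GLS} and Corollary~\ref{c:Iitaka dim and asymp deg}, using the well-developed machinery of Okounkov bodies / Newton--Okounkov semigroups to produce the limits, and then to compare the ``algebraic'' count $\dim_\KK V_m$ with the ``geometric'' moving-intersection count via the degree of $\phi_m$. Concretely, for part~\eqref{en:kappa vol}, I would first note that the birational class of $\phi_m\colon X\dashrightarrow Y_m$ stabilizes, so for large $m\in\mathbf{N}(V_\bullet)$ we have $\dim_\KK V_m=\dim_\KK H^0(Y_m,\mathcal{O}_{Y_m}(1))$ plus a correction coming from the fact that $V_m$ may only be a subseries of the complete linear series on $Y_m$; the key is that after passing to $X_\infty$ and blowing up to resolve the base locus, $V_\bullet$ pulls back to a graded linear series whose sections, for large $m$, generate an ideal that stabilizes in the relevant sense. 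The cleanest route is: pick a flag on $X_\infty$ adapted to $\phi_\infty$ (a point in a general fiber, then coordinates along $Y_\infty$) and form the Newton--Okounkov semigroup $\Gamma=\Gamma(V_\bullet)\subseteq\NN^{1+\kappa}$ associated to $V_\bullet$ via a valuation built from this flag; standard results (Lazarsfeld--Musta\c{t}\u{a}, Kaveh--Khovanskii) then give that $\dim_\KK V_m$ grows like $\mathrm{vol}(\Delta)\,m^\kappa$ where $\Delta$ is the $\kappa$-dimensional Okounkov body, provided the semigroup generates a subgroup of full rank $1+\kappa$ --- which is exactly the statement $\kappa(V_\bullet)=\kappa$. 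This yields existence of the limit and finiteness; positivity follows because $\Delta$ has nonempty interior, again by the full-rank condition.

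For part~\eqref{en:asymp mov int}, I would first dispose of the case $(V_\bullet^\kappa\cdot Z)_\mov=0$: if for every $\ell$ either $Z\subseteq\Bs(V_\ell)$ or $\dim\phi_\ell(Z)<\kappa$, then for a general choice of $\kappa$ divisors $D_1,\dots,D_\kappa\in|V_m|$ the intersection $D_1\cap\dots\cap D_\kappa\cap Z$ either is empty off the base locus or has positive-dimensional components contracted by $\phi_m$, but a dimension count (Bertini for the moving part of $|V_m|$ restricted to $Z$, cf.\ the factorization of $\phi_m|_Z$ through its image) shows the count off $\Bs(V_m)$ is $0$ for all such $m$; since by assumption $Z\nsubseteq\Bs(V_m)$ for some $m>0$ the sequence is eventually defined, and the limit is $0$. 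Now suppose some $\ell$ works; then $V_\bullet|_Z$ (the restricted graded linear series on $Z$, or rather on a resolution $\tilde Z\to Z$) has Iitaka dimension $\kappa=\dim Z$, so $\phi_m|_Z\colon Z\dashrightarrow\phi_m(Z)$ is generically finite, and by Corollary~\ref{c:Iitaka dim and asymp deg}(2) applied to $V_\bullet|_Z$ its degree stabilizes to $\delta(V_\bullet|_Z)$ for large $m$. The moving intersection number $(V_m^\kappa\cdot Z)_\mov$ is, by definition and Bertini, the number of preimages under $\phi_m|_Z$ of $\kappa$ general hyperplane sections of $\phi_m(Z)$ meeting it transversally, i.e.\ $\deg(\phi_m|_Z)\cdot\deg(\phi_m(Z)\subseteq\PP(V_m))$; dividing by $m^\kappa$ and letting $m\to\infty$, the factor $\deg(\phi_m|_Z)$ is eventually constant $=\delta(V_\bullet|_Z)$, and $\deg\phi_m(Z)/m^\kappa\to\kappa!\cdot(\text{something})$. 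The point is to identify that ``something'' with $\mathrm{vol}_\kappa(V_\bullet|_Z)/\kappa!$, which is exactly part~\eqref{en:kappa vol} applied to the graded linear series $V_\bullet|_Z$ on $Z$ --- so $(V_\bullet^\kappa\cdot Z)_\mov=\delta(V_\bullet|_Z)\mathrm{vol}_\kappa(V_\bullet|_Z)$.

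The remaining identity $\mathrm{vol}_\kappa(V_\bullet|_Z)=\mathrm{vol}_\kappa(V_\bullet)$ when $Z$ is a \emph{general} subvariety of dimension~$\kappa$ is where the work concentrates. The idea is that a general $Z$ can be taken to be (a component of) a complete intersection of $n-\kappa$ general members of $|V_{m_0}|$ for suitable $m_0$ avoiding the base locus, or more robustly, that restriction to a general $Z$ does not shrink the relevant Newton--Okounkov body: choosing the flag on $X$ so that its bottom $\kappa$ steps lie on $Z$, the valuation vectors computed from $V_m$ and from $V_m|_Z$ agree for sections that do not vanish on $Z$, and genericity of $Z$ guarantees the ``missing'' part has volume zero. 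Concretely I would invoke the restriction formula for Okounkov bodies (the $\kappa$-dimensional slice of $\Delta(V_\bullet)$ equals $\Delta(V_\bullet|_Z)$ for $Z$ a general member of a basepoint-free-enough linear system, iterated $n-\kappa$ times) together with the fact that $\phi_\infty$ is generically finite onto $Y_\infty$ of dimension $\kappa$, so that already on $X_\infty$ a general fiber of $\phi_\infty$ is finite and the relevant volume is ``concentrated in the $Y_\infty$-directions.'' I expect this comparison --- making precise that a \emph{general} $\kappa$-dimensional $Z$ sees the full $\kappa$-volume, and that $\delta(V_\bullet|_Z)=\deg(\phi_m|_Z)$ for large $m$ with the final chain of equalities (including the $\kappa=n$ case, where $Z=X$, $\delta(V_\bullet|_X)=\delta(V_\bullet)$, and one recovers $(V_\bullet^n\cdot X)_\mov=\delta(V_\bullet)\mathrm{vol}_n(V_\bullet)$) --- to be the main obstacle; the other steps are either formal consequences of the already-established Theorem~\ref{t:Iitaka for GLS} and Corollary~\ref{c:Iitaka dim and asymp deg} or standard applications of Bertini's theorem and Okounkov-body asymptotics.
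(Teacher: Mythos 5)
There are two genuine gaps, and both sit exactly at the steps you wave through. First, you never supply the bridge between the section-space dimensions $\dim_\KK V_m$ and the geometric data of the maps $\phi_m$. In part~(1) you assert that full rank of your Newton--Okounkov semigroup ``is exactly the statement $\kappa(V_\bullet)=\kappa$''; but $\kappa(V_\bullet)$ is defined here via $\dim\phi_m(X)$ for $m\gg 0$, and identifying that geometric dimension with the rank/growth exponent of the semigroup is precisely the nontrivial content --- the introduction stresses that \cite{KK} proves existence of the limit for the growth exponent but does \emph{not} identify it with $\dim\phi_m(X)$. The same conflation recurs, more damagingly, in part~(2): you claim that identifying $\lim_m \deg\overline{\phi_m(Z)}/m^\kappa$ with $\vol_\kappa(V_\bullet|_Z)$ ``is exactly part~(1) applied to $V_\bullet|_Z$''. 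It is not: part~(1) controls $\dim_\KK V_m|_Z$, whereas $\deg\overline{\phi_m(Z)}=\OO_{\overline{\phi_m(Z)}}(1)^{\kappa}$ is the volume of the \emph{finitely generated} series spanned by the moving part of $V_m|_Z$ (the content of Lemma~\ref{l:dim}), and passing from those finitely generated approximations to $\vol_\kappa$ is exactly Fujita approximation for graded linear series, which you neither prove nor cite. Without it, even the existence of $\lim_m\deg\overline{\phi_m(Z)}/m^\kappa$ is unestablished, and your chain $(V_m^\kappa\cdot Z)_\mov=\deg(\phi_m|_Z)\cdot\deg\overline{\phi_m(Z)}$ does not yield the limit formula. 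The paper's proof runs through precisely these inputs: the blow-up identity \eqref{eq:mov int as usual int}, Lemma~\ref{l:dim}, and the volume-existence and Fujita approximation theorems of \cite{KK}.

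Second, the place where you concentrate the work --- proving $\vol_\kappa(V_\bullet|_Z)=\vol_\kappa(V_\bullet)$ for a \emph{general} $Z$ by slicing Okounkov bodies --- is simultaneously too weak and unnecessarily heavy. The theorem asserts $(V_\bullet^\kappa\cdot Z)_\mov=\delta(V_\bullet|_Z)\vol_\kappa(V_\bullet)$ for \emph{every} $Z$ of dimension $\kappa$ admitting some $\ell$ with $Z\nsubseteq\Bs(V_\ell)$ and $\dim\phi_\ell(Z)=\kappa$, not merely for very general $Z$, and the restriction/slicing theorems you want to invoke are not available off the shelf in this generality (arbitrary graded linear series with $\kappa<n$, arbitrary characteristic, arbitrary such $Z$). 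No such machinery is needed: for $m\gg 0$ in $\mathbf{N}(V_\bullet|_Z)$ one has $\dim\phi_m(Z)=\kappa=\dim Y_m$, so $\phi_m(Z)$ is dense in the irreducible $Y_m$, and a section of $V_m$ vanishing on $Z$ would give a hyperplane containing $Y_m$; hence the restriction map $V_m\to V_m|_Z$ is injective (Lemma~\ref{l:restrict series inj}), $\dim_\KK V_m=\dim_\KK V_m|_Z$, and $\vol_\kappa(V_\bullet|_Z)=\vol_\kappa(V_\bullet)$ follows at once. This elementary observation is also what lets the paper reduce the entire theorem to the case $\kappa=n$ on $Z$ itself, after which the only remaining inputs are \eqref{eq:mov int as usual int}, Lemma~\ref{l:dim}, and \cite{KK}; as written, your proposal leaves both the rank identification in part~(1) and the degree-versus-volume comparison in part~(2) unproven.
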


Note that a general $\kappa$-dimensional closed subvariety $Z\subseteq X$ satisfies $Z\nsubseteq \Bs(V_m)$ and $\dim\phi_m(Z)=\kappa$ for all sufficiently large $m\in\mathbf{N}(V_\bullet)$.

\begin{remark}[Special cases of Theorem~\ref{t:kappa vol and asymp mov int} in the literature]\label{r:special cases in lit}
Let $V_\bullet$ be a graded linear series on a complete variety $X$. We summarize some special cases of Theorem~\ref{t:kappa vol and asymp mov int} that have appeared in the literature. Note that none of them deals with graded linear series $V_\bullet$ such that $\kappa(V_\bullet)<\dim X$.
\begin{itemize}
  \item \cite[Theorem~11.4.11]{L}: as already mentioned, this is the case where $V_\bullet$ is the complete graded linear series associated to a big line bundle;
  \item \cite[Theorem~B]{ELMNP}: this deals with the case where $V_\bullet$ is the ``restricted complete linear series'' satisfying an additional assumption. The \emph{restricted complete linear series} is the restriction of a complete graded linear series from an ambient variety containing $X$. The additional assumption implies in particular that $\phi_m\colon X\dashrightarrow Y_m$ is birational for all $m\gg 0$;
  \item \cite[Theorem~C]{J}: this generalizes \cite[Theorem~B]{ELMNP} to any graded linear series such that $\phi_m\colon X\dashrightarrow Y_m$ is birational for all $m\gg 0$;
  \item \cite[Theorem~1.2~(iv)]{BP}: this generalizes \cite[Theorem~B]{ELMNP} to those restricted complete linear series such that $\phi_m\colon X\dashrightarrow Y_m$ is generically finite for all $m\gg 0$.
\end{itemize}
\end{remark}

Let us finish the introduction with a comparison between our results and some related results in \cite{KK}. Let $V_\bullet$ be a graded linear series with $\mathbf{N}(V_\bullet)\ne\{0\}$ on a complete variety $X$ of dimension~$n$. In \cite{KK}, it was shown that $\dim_\KK V_m$ grows like $m^q$ for some nonnegative integer $q$, and this growth degree $q$ was \emph{defined} to be the Iitaka dimension of $V_\bullet$ (which is different from our definition in Corollary~\ref{c:Iitaka dim and asymp deg} in terms of $\dim\phi_m(X)$). It was further shown in \cite[Corollary~3.11~(1)]{KK} that $\lim_{m\in \mathbf{N}(V_\bullet)}\dim_\KK V_m/m^q$ exists, similar to our Theorem~\ref{t:kappa vol and asymp mov int}~\eqref{en:kappa vol}. However, \cite{KK} did not show that the growth degree $q$ is equal to $\dim\phi_m(X)$ for sufficiently large $m\in \mathbf{N}(V_\bullet)$. In fact, \cite{KK} contains no discussion about asymptotic Iitaka maps, let alone results similar to our Theorem~\ref{t:Iitaka for GLS}, \ref{t:Iitaka for semiample GLS}, or Corollary~\ref{c:Iitaka dim and asymp deg}. As for our Theorem~\ref{t:kappa vol and asymp mov int}~\eqref{en:asymp mov int}, the result \cite[Theorem~4.9~(1)]{KK} might look similar, but the latter is in fact \emph{non-asymptotic} in nature: in our notation, \cite[Theorem~4.9~(1)]{KK} is saying that if $\kappa(V_\bullet)=n$ and $V_m=V_1^m$ for all $m>0$, then $(V_1^n\cdot X)_\mov=(\deg\phi_1)\vol_n(V_\bullet)$.\footnote{The notations $L$ and $\Phi_L$ in \cite{KK} correspond to our $V_1$ and $\phi_1$, respectively.} The assumption $V_m=V_1^m$ for all $m>0$ implies that $\phi_m=\phi_1$ for all $m>0$ and $(V_\bullet^n\cdot X)_\mov=(V_1^n\cdot X)_\mov$, so there is essentially no asymptotic behavior in this case. Asymptotic moving intersection numbers of arbitrary graded linear series do not appear in \cite{KK}.

\begin{acknowledgment}
The authors gratefully acknowledge the support of MoST (Ministry of Science and Technology, Taiwan).
\end{acknowledgment}

\section{Asymptotic Iitaka maps of graded linear series}
In this section, we will prove Theorem~\ref{t:Iitaka for GLS} and \ref{t:Iitaka for semiample GLS}. We start by introducing some notation.

\begin{definition}[Section ring]\label{d:section ring}
Let $X$ be a complete variety, and let $V_\bullet$ be a graded linear series associated to a line bundle $L$ on $X$. The \emph{section ring} associated to $V_\bullet$ is the graded $\KK$-algebra \[
 R(V_\bullet)=R(X,V_\bullet)=\bigoplus_{m=0}^\infty V_m.\]
\end{definition}

For each $m\in\NN$, we denote \[
 \KK[V_m]=\text{the graded $\KK$-subalgebra of $R(V_\bullet)$ generated by $V_m$.}\]
If $R$ is a graded integral domain, we write $R_{(\langle 0\rangle)}$ for the field \[
 R_{(\langle 0\rangle)}=\{a/b\mid a,b\in R\text{ are homogeneous of the same degree, }b\ne 0\}, \]
namely the degree $0$ part of the localization of $R$ that inverts all homogeneous elements not in the prime ideal $\langle 0\rangle$.

\begin{lemma}\label{l:Q((K[V_m]))=K(Y_m)}
Let $X$ be a complete variety, and let $V_\bullet$ be a graded linear series associated to a line bundle $L$ on $X$. If \[
\phi_m\colon X\dashrightarrow \PP(V_m) \]
is the rational map defined by $V_m$, and $Y_m\subseteq \PP(V_m)$ is the closure of $\phi_m(X)$, then $\KK[V_m]$ is the homogeneous coordinate ring of $Y_m$ in $\PP(V_m)$. Consequently, $\KK[V_m]_{(\langle 0\rangle)}$ is the function field of $Y_m$, that is, \[
 \KK[V_m]_{(\langle 0\rangle)}=\KK(Y_m). \]
\end{lemma}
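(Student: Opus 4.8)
The plan is to exhibit $\KK[V_m]$ as the image of a natural surjection from a polynomial ring and identify the kernel with the homogeneous ideal of $Y_m$. Concretely, if we pick a basis $s_0,\dots,s_N$ of $V_m$, then these sections give homogeneous coordinates on $\PP(V_m)$, and evaluation of sections defines a graded $\KK$-algebra surjection $\KK[x_0,\dots,x_N]\twoheadrightarrow\KK[V_m]$ sending $x_i\mapsto s_i$. The rational map $\phi_m$ is, by definition, $x\mapsto[s_0(x):\dots:s_N(x)]$ on the open locus where not all $s_i$ vanish, so a homogeneous polynomial $F$ lies in the kernel of this surjection precisely when $F(s_0,\dots,s_N)=0$ as a section of $L^{\otimes md}$ (where $d=\deg F$), which — because $X$ is a variety and the $s_i$ have no common zero on a dense open set — happens exactly when $F$ vanishes identically on $\phi_m(X)$, hence on its closure $Y_m$. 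Therefore the kernel is the homogeneous ideal $I(Y_m)$, and $\KK[V_m]\cong\KK[x_0,\dots,x_N]/I(Y_m)$ is the homogeneous coordinate ring of $Y_m$.

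First I would set up the evaluation homomorphism and check it is well-defined and graded; the degree-$1$ part is the tautological isomorphism $V_m\xrightarrow{\sim}(\text{linear forms})$ composed with the inclusion into $\KK[V_m]_1$, and higher degrees are determined by multiplicativity using $V_m\cdot V_m\subseteq V_{2m}$, etc. Next I would argue the kernel equals $I(Y_m)$: one inclusion is clear since any $F$ vanishing on $Y_m\supseteq\phi_m(X)$ restricts to $0$ on the dense open set where $\phi_m$ is defined, and a section of a line bundle on a variety that vanishes on a dense open set is zero; conversely, if $F(s_0,\dots,s_N)=0$ identically, then $F$ vanishes at every point of $\phi_m(X)$ in the chart-free sense, hence on $Y_m$. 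This gives the first assertion.

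For the consequence, I would invoke the standard fact that for a projective variety $Y\subseteq\PP^N$ with homogeneous coordinate ring $S=S(Y)$, one has $S_{(\langle 0\rangle)}=\KK(Y)$: an element $a/b$ with $a,b\in S_d$ homogeneous of the same degree, $b\ne 0$, is exactly a ratio of degree-$d$ coordinate functions, which is a rational function on $Y$, and every rational function on $Y$ arises this way (clear denominators). Since $S=\KK[V_m]$ is an integral domain (being a subring of the domain $R(V_\bullet)$, which sits inside $\bigoplus_m H^0(X,L^{\otimes m})$, a domain because $X$ is a variety), the localization $\KK[V_m]_{(\langle 0\rangle)}$ makes sense and equals $\KK(Y_m)$.

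I expect the only real subtlety is bookkeeping with the grading: $\KK[V_m]$ is graded by the exponent of $L$, so its degree-$d$ piece lives in $H^0(X,L^{\otimes md})$, whereas as the coordinate ring of $Y_m\subseteq\PP(V_m)$ its natural grading counts monomials in the $s_i$; these two gradings coincide under $x_i\mapsto s_i$, and one must be mildly careful that ``homogeneous of the same degree'' in the definition of $R_{(\langle 0\rangle)}$ refers to this common grading. There is also the minor point, in positive characteristic, that the surjectivity and kernel computation are purely about vanishing of sections and scheme-theoretic images, so no separability or characteristic hypotheses intervene. None of this is a genuine obstacle; it is the familiar identification of the image of a projective rational map with $\operatorname{Proj}$ of the subalgebra generated by the defining sections.
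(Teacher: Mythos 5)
Your proposal is correct and follows essentially the same route as the paper's proof: choose a basis $s_0,\ldots,s_N$ of $V_m$, realize $\KK[V_m]$ as $\KK[x_0,\ldots,x_N]$ modulo the homogeneous relations among the $s_i$, identify that kernel with the homogeneous ideal of $Y_m$ via the description $\phi_m(p)=[s_0(p):\cdots:s_N(p)]$ on a dense open set, and then pass to the function field by the standard fact (the paper cites Hartshorne, Ch.~I, Theorem~3.4(c)) that the degree-zero homogeneous localization of the homogeneous coordinate ring is $\KK(Y_m)$.
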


\begin{proof}
Let $\{s_0,\ldots,s_N\}$ be a basis of $V_m$. Then there is a surjective $\KK$-algebra homomorphism from the polynomial ring $\KK[x_0,\ldots,x_N]$ to $\KK[V_m]$ sending $x_i$ to $s_i$. The kernel is generated by all homogeneous polynomials $f(x_0,\ldots,x_N)$ such that $f(s_0,\ldots,s_N)=0$ in $\KK[V_m]$. Hence \[
 \KK[V_m]\cong \KK[x_0,\ldots,x_N]/\langle f\text{ homogeneous}\mid f(s_0,\ldots,s_N)=0\text{ in }\KK[V_m]\rangle.\]

On the other hand, the rational map $\phi_m$ can be expressed as \[
\phi_m\colon X\dashrightarrow \PP(V_m)\cong \PP^N,\quad \phi_m(p)=[s_0(p):\cdots:s_N(p)] \]
for all points $p$ of $X$ in the Zariski open set \[
 U=\{p\in X\mid s_0(p),\ldots,s_N(p)\text{ are not all }0\}.\]
Since $Y_m$ is the closure of $\phi_m(U)$ in $\PP^N$, the homogeneous ideal of $Y_m$ is generated by all homogeneous polynomials $f(x_0,\ldots,x_N)$ such that \[
 f\bigl(\phi_m(p)\bigr)=f\bigl(s_0(p),\ldots,s_N(p)\bigr)=0 \]
for all $p\in U$. Since $U\subseteq X$ is Zariski open,
\begin{align*}
f\bigl(s_0(p),\ldots,s_N(p)\bigr)=0\text{ for all }p\in U &\iff f\bigl(s_0(p),\ldots,s_N(p)\bigr)=0\text{ for all }p\in X\\
  &\iff f(s_0,\ldots,s_N)=0\text{ in }\KK[V_m].
\end{align*}
Hence the homogeneous coordinate ring of $Y_m$ is precisely $\KK[V_m]$. Consequently, the function field $\KK(Y_m)=\KK[V_m]_{(\langle 0\rangle)}$ by \cite[Ch.~I, Theorem~3.4(c)]{H}.
\end{proof}

\begin{proof}[Proof of Theorem~\ref{t:Iitaka for GLS}]
We may assume that $X$ is projective by Chow's lemma. There is a natural homomorphism \[
 R(V_\bullet)_{(\langle 0\rangle)}\hooklongrightarrow \KK(X) \]
since the quotient of two sections of a line bundle is a rational function. It follows that $R(V_\bullet)_{(\langle 0\rangle)}$ is a finitely generated extension field of $\KK$ since $\KK(X)$ is. Also, for each $m\in\mathbf{N}(V_\bullet)$ there is an inclusion \[
 \KK[V_m]_{(\langle 0\rangle)}\hooklongrightarrow R(V_\bullet)_{(\langle 0\rangle)} \]
since $\KK[V_m]\subseteq R(V_\bullet)$. By \cite[Ch.~I, Theorem~4.4]{H} and Lemma~\ref{l:Q((K[V_m]))=K(Y_m)}, the homomorphisms \[
 \KK[V_m]_{(\langle 0\rangle)}\hooklongrightarrow R(V_\bullet)_{(\langle 0\rangle)}\hooklongrightarrow \KK(X) \]
induce dominant rational maps \[
 \begin{tikzcd}
   Y_m & Y_\infty \ar[l,dashed,"\nu_m"] & X \ar[l,dashed,"\varphi"]
 \end{tikzcd} \]
where $Y_\infty$ is a projective variety such that $\KK(Y_\infty)=R(V_\bullet)_{(\langle 0\rangle)}$. Moreover, there is a commutative diagram \[
\begin{tikzcd}
    X \ar[d,dashed,"\phi_m"'] \ar[dr,dashed,"\varphi"] \\
    Y_m & Y_\infty\ . \ar[l,dashed,"\nu_m"]
  \end{tikzcd}\]

Let $X_\infty\subseteq X\times Y_\infty$ be the graph of $\varphi$, that is, the closure of \[
 \{\bigl(p,\varphi(p)\bigr)\mid p\in X, \text{ $\varphi$ is regular at }p\}\subseteq X\times Y_\infty. \]
Then there is a commutative diagram \[
\begin{tikzcd}
    X \ar[dr,dashed,"\varphi"'] & X_\infty \ar[l,"u_\infty"'] \ar[d,"\phi_\infty"] \\
      & Y_\infty
  \end{tikzcd}\]
where $u_\infty$ and $\phi_\infty$ are the projections. The projection $u_\infty$ is birational since $X_\infty$ is the graph of $\varphi$. Combining the previous two commutative diagrams, we have a commutative diagram \[
\begin{tikzcd}
    X \ar[d,dashed,"\phi_m"'] & X_\infty \ar[l,"u_\infty"'] \ar[d,"\phi_\infty"] \\
    Y_m & Y_\infty \ar[l,dashed,"\nu_m"]
  \end{tikzcd}\]
of surjective morphisms and dominant rational maps for each $m\in \mathbf{N}(V_\bullet)$.

It only remains to show that if $m\in \mathbf{N}(V_\bullet)$ is sufficiently large, then $\nu_m$ is birational, or equivalently $\KK[V_m]_{(\langle 0\rangle)}=R(V_\bullet)_{(\langle 0\rangle)}$. First observe that for $i,j\in \mathbf{N}(V_\bullet)$ with $i\mid j$, it follows from $V_i^{j/i}\subseteq V_j$ that \[
  \KK[V_i]_{(\langle 0\rangle)}\subseteq \KK[V_j]_{(\langle 0\rangle)}. \]
Since $R(V_\bullet)_{(\langle 0\rangle)}=\bigcup_{m\in\mathbf{N}(V_\bullet)}\KK[V_m]_{(\langle 0\rangle)}$ is a finitely generated extension field of $\KK$, \[
  R(V_\bullet)_{(\langle 0\rangle)}=\KK[V_\ell]_{(\langle 0\rangle)} \]
for some $\ell\in\mathbf{N}(V_\bullet)$.

Let $N$ be a positive integer such that all multiples of $\gcd\mathbf{N}(V_\bullet)$ greater than or equal to $N$ appear in $\mathbf{N}(V_\bullet)$. If $m\in\mathbf{N}(V_\bullet)$ and $m\ge N+\ell$, then $m-\ell\in\mathbf{N}(V_\bullet)$, namely $V_{m-\ell}\ne 0$. Hence $\KK[V_\ell]_{(\langle 0\rangle)}\subseteq \KK[V_m]_{(\langle 0\rangle)}$ by $V_\ell\cdot V_{m-\ell}\subseteq V_m$, so \[
 \KK[V_m]_{(\langle 0\rangle)}=R(V_\bullet)_{(\langle 0\rangle)} \]
for all $m\in\mathbf{N}(V_\bullet)$ such that $m\ge N+\ell$.
\end{proof}

\begin{proof}[Proof of Theorem~\ref{t:Iitaka for semiample GLS}]
Let \[
 M(V_\bullet)_+=\{m\in M(V_\bullet)\mid m>0\}. \]
For any $i,j\in M(V_\bullet)_+$ with $i\mid j$, the image of the $(j/i)$\textsuperscript{th} symmetric power $S^{j/i}(V_{i})$ of $V_{i}$ under the multiplication map \[
 S^{j/i}(V_{i})\longrightarrow V_j \]
is a free subseries of $V_{j}$, which corresponds to a projection from $Y_{j}$ onto the $(j/i)$\textsuperscript{th} Veronese embedding of $Y_{i}$ in $\PP(V_{j})$. We denote this projection by $\pi_{i,j}$, which is a finite surjective morphism that fits into the commutative diagram
\begin{equation}\label{CD:project to Veronese}
\begin{tikzcd}[column sep=tiny]
& X \arrow[dl,"\phi_i"'] \arrow[dr,"\phi_j"] & \\
Y_i & & Y_j\ . \arrow[ll,"\pi_{i,j}"]
\end{tikzcd}
\end{equation}

Let $\phi\colon X\longrightarrow Y$ be the semiample fibration associated to the line bundle $L$ \cite[Theorem~2.1.27]{L}. Since $\phi$ is the morphism defined by the complete linear series $H^0(X,L^{\otimes m})$ for sufficiently large $m\in M(V_\bullet)$, and $\phi_m$ is defined by the subseries $V_m\subseteq H^0(X,L^{\otimes m})$, the corresponding projection $\pi_m\colon Y\longrightarrow Y_m$ is a finite surjective morphism that fits into the commutative diagram
\begin{equation}\label{CD:project from semiample fibration}
\begin{tikzcd}[column sep=tiny]
& X \arrow[dl,"\phi_m"'] \arrow[dr,"\phi"] & \\
Y_m & & Y\ . \arrow[ll,"\pi_m"]
\end{tikzcd}
\end{equation}
In fact, there is a finite surjective morphism $\pi_m$ that fits into the commutative diagram \eqref{CD:project from semiample fibration} for all $m\in M(V_\bullet)_+$, since for small $m$, one can again project onto a suitable Veronese embedding of $Y_m$. From the commutativity of \eqref{CD:project to Veronese} and \eqref{CD:project from semiample fibration} and the surjectivity of $\phi$, it follows that the diagram \[
\begin{tikzcd}[column sep=tiny]
& Y \arrow[dl,"\pi_i"'] \arrow[dr,"\pi_j"] & \\
Y_i & & Y_j \arrow[ll,"\pi_{i,j}"]
\end{tikzcd}\]
also commutes.

We claim that for each $m\in M(V_\bullet)_+$, there exists an $\ell\in M(V_\bullet)_+$ such that $m\mid \ell$, and $\pi_{\ell,k}$ are isomorphisms for all $k>0$ with $\ell\mid k$. For if this were false, there would exist an infinite sequence $m,k_1,k_2,\ldots\in M(V_\bullet)_+$ such that $m\mid k_1\mid k_2\mid\cdots$, and no $\pi_{k_i,k_{i+1}}$ is an isomorphism. Then we would have a commutative diagram \[
\begin{tikzcd}[column sep=large]
 & & Y \arrow[dll,"\pi_{k_1}"'] \arrow[dl,"\pi_{k_2}"] \arrow[d,"\pi_{k_3}"] \arrow[dr,"\cdots"] \\
Y_{k_1} & Y_{k_2} \arrow[l,"\pi_{k_1,k_2}"] & Y_{k_3} \arrow[l,"\pi_{k_2,k_3}"] & \cdots \arrow[l]
\end{tikzcd}\]
of finite surjective morphisms, with the bottom row an infinite strictly ascending chain of finite extensions, which is impossible since $\mathcal{O}_Y$ is a Noetherian $\mathcal{O}_{Y_{k_1}}$-module.

We now define the variety $Y_\infty$ and the morphisms $\nu_m\colon Y_\infty\longrightarrow Y_m$ that fulfill the statement of Theorem~\ref{t:Iitaka for semiample GLS}. Fix $m_0,\ell_0\in M(V_\bullet)_+$ such that $m_0\mid \ell_0$ and $\pi_{\ell_0,k}$ are isomorphisms for all $k>0$ with $\ell_0\mid k$. We define \[
 Y_\infty=Y_{\ell_0}\quad\text{and}\quad \phi_\infty=\phi_{\ell_0}\colon X\longrightarrow Y_\infty. \]
For each $m\in M(V_\bullet)_+$, pick an $\ell\in M(V_\bullet)_+$ such that $m\mid \ell$ and $\pi_{\ell,k}$ are isomorphisms for all $k>0$ with $\ell\mid k$, and define $\nu_m\colon Y_\infty\longrightarrow Y_m$ to be the morphism that makes the diagram \[
\begin{tikzcd}
 & Y_\infty=Y_{\ell_0} \arrow[dl,"\nu_m"'] \arrow[dr,"\pi_{\ell_0,\ell\ell_0}^{-1}"]  \\
Y_m & Y_\ell \arrow[l,"\pi_{m,\ell}"] & Y_{\ell\ell_0} \arrow[l,"\pi_{\ell,\ell\ell_0}"]
\end{tikzcd}\]
commute. The definition of $\nu_m$ does not depend on the choice of $\ell$, thanks to the commutativity of the diagram \[
\begin{tikzcd}[column sep=tiny]
& Y_k \arrow[dl,"\pi_{i,k}"'] \arrow[dr,"\pi_{j,k}"] & \\
Y_i & & Y_j \arrow[ll,"\pi_{i,j}"]
\end{tikzcd}\]
for any $i,j,k\in M(V_\bullet)_+$ with $i\mid j\mid k$, which in turn follows from the commutativity of \eqref{CD:project to Veronese}. By definition, $\nu_m$ are finite and surjective for all $m\in M(V_\bullet)_+$ since $\pi_{i,j}$ are so for all $i,j\in M(V_\bullet)_+$ with $i\mid j$. The commutativity of \eqref{CD:semiample Iitaka} follows from that of \eqref{CD:project to Veronese}.

It only remains to show that $\nu_m$ is an isomorphism if $m\in M(V_\bullet)$ is sufficiently large. Let $N$ be a positive integer such that all multiples of $\gcd M(V_\bullet)$ greater than or equal to $N$ appear in $M(V_\bullet)$. If $m\in M(V_\bullet)$ and $m\ge N+\ell_0$, then $m-\ell_0\in M(V_\bullet)$. The image of the multiplication map \[
 V_{\ell_0}\otimes V_{m-\ell_0}\longrightarrow V_m \]
is a free subseries of $V_m$, which corresponds to a projection $\mu_m$ from $Y_m$ to the Segre embedding of $Y_{\ell_0}\times Y_{m-\ell_0}$ in $\PP(V_m)$ that fits into the commutative diagram \[
\begin{tikzcd}[column sep=huge]
 & X \arrow[dl,"\phi_{\ell_0}"'] \arrow[d,"\phi_{\ell_0}\times \phi_{m-\ell_0}"] \arrow[dr,"\phi_m"]  \\
Y_{\ell_0} & Y_{\ell_0}\times Y_{m-\ell_0} \arrow[l,"pr_{Y_{\ell_0}}"] & Y_m\ . \arrow[l,"\mu_m"]
\end{tikzcd}\]
This, combined with \eqref{CD:semiample Iitaka}, shows that $\nu_m$ and $pr_{Y_{\ell_0}}\circ \mu_m$ are inverses of each other, so $\nu_m$ is an isomorphism.
\end{proof}

\section{$\kappa$-volumes and asymptotic moving intersection numbers}
In this section, we will prove Theorem~\ref{t:kappa vol and asymp mov int}. To streamline the proof, we first establish two lemmas.

\begin{lemma}\label{l:restrict series inj}
Let $X$ be a complete variety. Let $L$ be a line bundle on $X$, and let $V\subseteq H^0(X,L)$ be a nonzero subspace. Let $\phi\colon X\dashrightarrow \PP(V)$ be the rational map defined by $V$, and let $Y\subseteq \PP(V)$ be the closure of $\phi(X)$. If $Z\subseteq X$ is a subvariety not contained in the base locus $\Bs(V)$ of $V$, and $\phi(Z)$ is dense in $Y$, then the restriction $V\longrightarrow H^0(Z,L|_Z)$ is injective.
\end{lemma}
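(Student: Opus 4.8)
The plan is to argue by contradiction: suppose there is a nonzero $s\in V$ with $s|_Z=0$, and show that $Y$ is then forced to lie in a hyperplane of $\PP(V)$, which is impossible.

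First I would set up coordinates exactly as in the proof of Lemma~\ref{l:Q((K[V_m]))=K(Y_m)}: fix a basis $\{s_0,\ldots,s_N\}$ of $V$ and identify $\PP(V)\cong\PP^N$ accordingly, so that on the nonempty (hence dense) open set $X^\circ=X\setminus\Bs(V)$ the rational map $\phi$ is the morphism $p\mapsto[s_0(p):\cdots:s_N(p)]$ and $Y=\overline{\phi(X^\circ)}$. The only fact I need about $Y$ is that it is non-degenerate in $\PP(V)$, i.e.\ contained in no hyperplane $\{\sum c_ix_i=0\}$: otherwise the section $\sum c_is_i$ would vanish on the dense open set $X^\circ$, hence vanish identically on the variety $X$, forcing $(c_0,\ldots,c_N)=0$ by the linear independence of $s_0,\ldots,s_N$.

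Next, write the given section as $s=\sum c_is_i$ with $(c_0,\ldots,c_N)\ne 0$, and let $H=\{\sum c_ix_i=0\}\subseteq\PP(V)$ be the corresponding hyperplane, so that for $p\in X^\circ$ one has $\phi(p)\in H$ if and only if $(\sum c_is_i)(p)=s(p)=0$. Now put $Z^\circ=Z\setminus\Bs(V)$: it is nonempty because $Z\nsubseteq\Bs(V)$, it is dense in $Z$ because $Z$ is irreducible, and $\phi$ restricts to a morphism on it; moreover the hypothesis that $\phi(Z)$ is dense in $Y$ means precisely $\overline{\phi(Z^\circ)}=Y$. Since $s|_Z=0$, every $p\in Z^\circ$ satisfies $s(p)=0$, so $\phi(Z^\circ)\subseteq H$, and taking closures gives $Y=\overline{\phi(Z^\circ)}\subseteq H$, contradicting the non-degeneracy of $Y$. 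Hence no such $s$ exists, and the restriction map $V\to H^0(Z,L|_Z)$ is injective.

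I don't expect any real obstacle; the whole argument is just that $Y$ spans $\PP(V)$ whereas a nonzero $s$ vanishing on $Z$ would confine $\phi(Z^\circ)$, and hence its closure $Y$, to the hyperplane $H$. The only things requiring a little care are the routine density bookkeeping (irreducibility of $X$ and of $Z$, so that the nonempty opens $X^\circ$ and $Z^\circ$ are dense, together with the fact that a section of a line bundle vanishing on a dense open set vanishes identically) and the correct reading of the rational map $\phi$ and of the phrase ``$\phi(Z)$ is dense in $Y$'' in terms of the genuine morphism $\phi|_{Z^\circ}$.
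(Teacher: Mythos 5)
Your proof is correct and rests on exactly the same two ingredients as the paper's argument, namely the non-degeneracy of $Y$ in $\PP(V)$ and the fact that $\phi$ restricted to $Z\setminus\Bs(V)$ dominates $Y$; the paper merely packages these as a commutative diagram of injective pullback maps ($\phi^*$ an isomorphism, $\iota_Y^*$ and $\phi|_Z^*$ injective), whereas you unwind the same reasoning in coordinates via a hyperplane contradiction. No gaps.
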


\begin{proof}
We may assume that $\phi$ is regular on $Z$ after replacing $Z$ by $Z\setminus \Bs(V)$. There is a commutative diagram \[
 \begin{tikzcd}
  Z\ar[d,"\phi|_Z"] \ar[r,hook,"\iota_Z"]& X  \ar[d,dashed,"\phi"] \\
  Y \ar[r,hook,"\iota_Y"]& \PP(V)
 \end{tikzcd}\]
where $\iota_Y$ and $\iota_Z$ are the inclusions, which induces a commutative diagram \[
 \begin{tikzcd}
  H^0(Z,L|_Z) & V \ar[l,"\iota_Z^*"] \\
  H^0\bigl(Y,\mathcal{O}_Y(1)\bigr) \ar[u,"\phi|_Z^*"] & H^0\bigl(\PP(V),\mathcal{O}(1)\bigr) \ar[l,"\iota_Y^*"] \ar[u,"\phi^*"]\ .
 \end{tikzcd}\]
Note that $\phi^*$ is an isomorphism, $\iota_Y^*$ is injective since $Y$ does not lie in a hyperplane in $\PP(V)$, and $\phi|_Z^*$ is injective since $\phi|_Z$ is dominant. Therefore $\iota_Z^*$ is injective.
\end{proof}

\begin{lemma}\label{l:dim}
Let $X$ be a complete variety. Let $M$ be a globally generated line bundle on $X$, and let $W\subseteq H^0(X,M)$ be a basepoint-free linear series. Let $Y\subseteq \PP(W)$ be the image of the morphism $X\longrightarrow \PP(W)$ defined by $W$. If $W_{\bullet}$ is the graded linear series associated to $M$ on $X$ given by \[
  W_{k}=\operatorname{Im}\bigl(S^k(W)\longrightarrow H^0(X,M^{\otimes k})\bigr), \]
that is, the image of the $k$\textsuperscript{th} symmetric power of $W$ under the multiplication map, then \[
 \dim_\KK W_k=h^0\bigl(Y,\OO_Y(k)\bigr) \]
for all sufficiently large $k$.
\end{lemma}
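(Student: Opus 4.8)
The plan is to compare the spaces $W_k$ directly with the sections of $\OO_Y(k)$ through the morphism $\phi\colon X\to Y\subseteq\PP(W)$ defined by $W$, and then to apply Serre vanishing on $\PP(W)$.

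First I would fix notation: write $\PP(W)\cong\PP^N$, let $R=\bigoplus_{k\ge 0}H^0\bigl(\PP(W),\OO_{\PP(W)}(k)\bigr)$ be its homogeneous coordinate ring, so that $R_k=S^k(W)$, and let $\mathcal I_Y\subseteq\OO_{\PP(W)}$ be the ideal sheaf of $Y$. Since $W$ is basepoint-free, $\phi$ is a morphism with $\phi^*\OO_{\PP(W)}(1)=M$, hence $\phi^*\OO_Y(k)=M^{\otimes k}$ for every $k$. I would then note that the multiplication map $S^k(W)\to H^0(X,M^{\otimes k})$ factors as \[
 S^k(W)=H^0\bigl(\PP(W),\OO_{\PP(W)}(k)\bigr)\xrightarrow{\ \rho_k\ }H^0\bigl(Y,\OO_Y(k)\bigr)\xrightarrow{\ \phi^*\ }H^0(X,M^{\otimes k}),
\]
where $\rho_k$ is restriction of sections; indeed $\phi^*$ is a ring homomorphism carrying the restriction to $Y$ of each $w\in W$ to $w$ itself in $H^0(X,M)$, so it carries a product of such restrictions to the corresponding product of sections. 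Moreover $\phi^*$ is injective, because $\phi$ is surjective onto the reduced variety $Y$: a section of $\OO_Y(k)$ whose pullback to $X$ vanishes must vanish on the dense subset $\phi(X)=Y$, hence is zero. Therefore \[
 \dim_\KK W_k=\dim_\KK\operatorname{Im}\bigl(S^k(W)\to H^0(X,M^{\otimes k})\bigr)=\dim_\KK\operatorname{Im}(\rho_k).
\]

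Next I would show that $\rho_k$ is surjective for all sufficiently large $k$. This is immediate from Serre vanishing: twisting the structure sheaf sequence $0\to\mathcal I_Y\to\OO_{\PP(W)}\to\OO_Y\to 0$ of $Y$ in $\PP(W)$ by $\OO_{\PP(W)}(k)$ and passing to cohomology shows that $\rho_k$ is surjective as soon as $H^1\bigl(\PP(W),\mathcal I_Y(k)\bigr)=0$, which holds for all $k\gg 0$ since $\mathcal I_Y$ is coherent on projective space. Combined with the previous paragraph, this gives $\dim_\KK W_k=\dim_\KK H^0\bigl(Y,\OO_Y(k)\bigr)=h^0\bigl(Y,\OO_Y(k)\bigr)$ for all sufficiently large $k$, which is exactly the assertion of the lemma.

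I do not expect a serious obstacle: the lemma is essentially the quantitative statement that, for a subvariety of projective space, the sections of a high twist are precisely the restrictions of ambient sections. The only points requiring a little care are the factorization of the multiplication map through $H^0(Y,\OO_Y(k))$ and the injectivity of $\phi^*$, both of which are formal consequences of $\phi$ being a surjective morphism onto the reduced variety $Y$; the remainder is Serre vanishing. (Alternatively, one could identify $W_k$ with the degree-$k$ graded piece of the homogeneous coordinate ring $\KK[W]$ of $Y$ via Lemma~\ref{l:Q((K[V_m]))=K(Y_m)} and then quote the standard comparison between that ring and $\bigoplus_k H^0(Y,\OO_Y(k))$, which agree in all large degrees.)
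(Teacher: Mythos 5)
Your argument is correct and follows essentially the same route as the paper's proof: both identify $W_k$ with the image of the restriction map $H^0\bigl(\PP(W),\OO_{\PP(W)}(k)\bigr)\to H^0\bigl(Y,\OO_Y(k)\bigr)$ pulled back along the surjection $X\twoheadrightarrow Y$, use injectivity of that pullback, and conclude via the ideal sheaf sequence and Serre vanishing that the restriction is surjective for $k\gg 0$. No gaps; nothing further is needed.
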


\begin{proof}
Write the morphism $X\longrightarrow \PP(W)$ defined by $W$ as a composition \[
 \begin{tikzcd}
  X\ar[r,two heads,"\varphi"] & Y \ar[r,hook,"\iota"] & \PP(W)\ ,
 \end{tikzcd}\]
where $\iota$ denotes the inclusion. Then \[
 M=(\iota\circ\varphi)^* \OO_{\PP(W)}(1)=\varphi^* \OO_{Y}(1),\]
and \[
 W=(\iota\circ\varphi)^*H^0\bigl(\PP(W),\OO_{\PP(W)}(1)\bigr).\]
Hence
\begin{align*}
 W_{k}&=(\iota\circ\varphi)^*\operatorname{Im}\Bigl(S^kH^0\bigl(\PP(W),\OO_{\PP(W)}(1)\bigr)\longrightarrow H^0\bigl(\PP(W),\OO_{\PP(W)}(k)\bigr)\Bigr) \\
  &=(\iota\circ\varphi)^* H^0\bigl(\PP(W),\OO_{\PP(W)}(k)\bigr)\\
  &=\varphi^*\Bigl(\iota^*H^0\bigl(\PP(W),\OO_{\PP(W)}(k)\bigr)\Bigr).
\end{align*}

Let $\mathcal{I}_Y$ denote the ideal sheaf of $Y$ on $\PP(W)$. The short exact sequence \[
 0\longrightarrow \mathcal{I}_Y(k)\longrightarrow \OO_{\PP(W)}(k)\longrightarrow \OO_{Y}(k)\longrightarrow 0 \]
of sheaves on $\PP(W)$ induces a long exact sequence \[
 \cdots\longrightarrow H^0\bigl(\PP(W),\OO_{\PP(W)}(k)\bigr)\xrightarrow{\ \iota^*\ } H^0\bigl(Y,\OO_{Y}(k)\bigr)\longrightarrow H^1\bigl(\PP(W),\mathcal{I}_Y(k)\bigr)\longrightarrow\cdots \]
of cohomology groups. If $k$ is sufficiently large, then $H^1\bigl(\PP(W),\mathcal{I}_Y(k)\bigr)=0$ by Serre's vanishing theorem, so \[
 \iota^*H^0\bigl(\PP(W),\OO_{\PP(W)}(k)\bigr)=H^0\bigl(Y,\OO_{Y}(k)\bigr).\]
Hence \[
 W_k=\varphi^*H^0\bigl(Y,\OO_{Y}(k)\bigr).\]
Since $\varphi\colon X\longrightarrow Y$ is onto, $\varphi^*$ is one-to-one. Thus \[
 \dim_\KK W_k=\dim_\KK H^0\bigl(Y,\OO_Y(k)\bigr)=h^0\bigl(Y,\OO_Y(k)\bigr). \]
\end{proof}

\begin{proof}[Proof of Theorem~\ref{t:kappa vol and asymp mov int}]
For each $m\in \mathbf{N}(V_\bullet)$, let \[
 \pi_m \colon X_m\longrightarrow X \]
be the blowing-up of $X$ with respect to the base ideal $\mathfrak{b}_m$ of $V_m$. Then \[
 \mathfrak{b}_m\OO_{X_m}=\OO_{X_m}(-F_m)\]
for some effective Cartier divisor $F_m$ on $X_m$. Let $M_m$ denote the globally generated line bundle \[
  M_m=\pi^*_m L^{\otimes m} \otimes \OO_{X_m}(-F_m) \]
on $X_m$. Then \[
  \pi^*_m |V_m|= |W_m| + F_m, \]
where $W_m\subseteq H^0(X_m,M_m)$ is a basepoint-free linear series that defines a morphism $\varphi_m\colon X_m\longrightarrow Y_m$ fitting into the commutative diagram \[
 \begin{tikzcd}
   X_m \ar[d,"\pi_m"'] \ar[dr,"\varphi_m",end anchor=north west] \\
   X \ar[r,dashed,"\phi_m"']& Y_m\subseteq \PP_m\ ,
 \end{tikzcd}\]
where $\PP_m=\PP(V_m)=\PP(W_m)$.

If $Z\nsubseteq \Bs(V_m)$ and $Z_m\subseteq X_m$ is the strict transform of $Z$, then
\begin{equation}\label{eq:mov int as usual int}
 (V_m^\kappa\cdot Z)_\mov=M_m^\kappa\cdot Z_m=\bigl(\varphi_m^* \OO_{Y_m}(1)\bigr)^\kappa\cdot Z_m=\OO_{Y_m}(1)^\kappa\cdot ({\varphi_m}_*Z_m).
\end{equation}
Since ${\varphi_m}_*Z_m=0$ if $\dim\varphi_m(Z_m)=\dim\phi_m(Z)<\kappa$, \[
 (V_m^\kappa\cdot Z)_\mov=0 \]
for all $m\in \mathbf{N}(V_\bullet|_Z)$ if $\kappa(V_\bullet|_Z)<\kappa$. Thus we may assume that $\kappa(V_\bullet|_Z)=\kappa$.

Since $\kappa(V_\bullet|_Z)=\kappa$, by Lemma~\ref{l:restrict series inj}, \[
 \dim_\KK V_m=\dim_\KK V_m|_Z \]
for all sufficiently large $m\in\mathbf{N}(V_\bullet|_Z)$. By definition, we also have \[
 (V_m^\kappa\cdot Z)_\mov=\bigl((V_m|_Z)^\kappa\cdot Z\bigr)_\mov \]
for all $m\in\mathbf{N}(V_\bullet|_Z)$. Hence it is enough to prove Theorem~\ref{t:kappa vol and asymp mov int} for the graded linear series $V_\bullet|_Z$ on $Z$. Since $\kappa(V_\bullet|_Z)=\dim Z$, this means that we may reduce Theorem~\ref{t:kappa vol and asymp mov int} to the case $\kappa=n$.

For each $m\in \mathbf{N}(V_\bullet)$, consider the graded linear series $W_{m,\bullet}$ associated to the line bundle $M_m$ on $X_m$ given by \[
  W_{m,k}=\operatorname{Im}\bigl(S^k(W_m)\longrightarrow H^0(X_m,M_m^{\otimes k})\bigr). \]
By Lemma~\ref{l:dim}, \[
 \dim_\KK W_{m,k}=h^0\bigl(Y_m,\OO_{Y_m}(k)\bigr) \]
for all sufficiently large $k$. Hence if $m$ is large enough so that $\dim Y_m=\kappa=n$, then \[
 \vol_n(W_{m,\bullet})=\lim_{k\to\infty}\frac{\dim_\KK W_{m,k}}{k^n/n!}=\lim_{k\to\infty}\frac{h^0\bigl(Y_m,\OO_{Y_m}(k)\bigr)}{k^n/n!}=\vol\bigl(\OO_{Y_m}(1)\bigr)=\OO_{Y_m}(1)^n. \]
Using \eqref{eq:mov int as usual int} in the case $\kappa=n$, $Z=X$, and $Z_m=X_m$, we get
\begin{equation}\label{eq:main}
  \vol_n(W_{m,\bullet})=\OO_{Y_m}(1)^n=\frac{(V_m^n\cdot X)_\mov}{\deg(\phi_m\colon X\dashrightarrow Y_m)}=\frac{(V_m^n\cdot X)_\mov}{\delta(V_\bullet)}
\end{equation}
for all sufficiently large $m\in \mathbf{N}(V_\bullet)$.

It is proved in \cite{KK} that the usual volume \[
 \vol(V_\bullet)=\vol_n(V_\bullet)=\lim_{m\in \mathbf{N}(V_\bullet)}\frac{\dim_\KK V_m}{m^n/n!} \]
of any graded linear series $V_\bullet$ exists (and is finite), and the following version of Fujita's approximation theorem holds: for any $\epsilon>0$, there exists an integer $m_0=m_0(\epsilon)$, such that if $m\in \mathbf{N}(V_\bullet)$ and $m\ge m_0$, then  \[
   \vol(V_\bullet)-\epsilon\le\frac{\vol(W_{m,\bullet})}{m^n}\le\vol(V_\bullet). \]
This implies that $\vol(V_\bullet)>0$ since $\vol(W_{m,\bullet})=\OO_{Y_m}(1)^n>0$ for all sufficiently large $m\in \mathbf{N}(V_\bullet)$ by \eqref{eq:main}. It also implies that \[
 \lim_{m\in \mathbf{N}(V_\bullet)}\frac{\vol(W_{m,\bullet})}{m^n}= \vol(V_\bullet). \]
So by \eqref{eq:main} again, \[
(V_\bullet^n\cdot X)_\mov=\lim_{m\in \mathbf{N}(V_\bullet)}\frac{(V_m^n\cdot X)_\mov}{m^n}=\lim_{m\in \mathbf{N}(V_\bullet)}\frac{\delta(V_\bullet)\vol(W_{m,\bullet})}{m^n}=\delta(V_\bullet)\vol(V_\bullet).\]
\end{proof}

\end{document}